\newcommand{\scal}[2]{\left\langle{#1}\mid {#2} \right\rangle}
\newcommand{\HH}{\ensuremath{\mathcal H}}
\newcommand{\GG}{\ensuremath{\mathcal G}}
\newcommand{\Y}{\ensuremath{\mathcal Y}}
\newcommand{\X}{\ensuremath{\mathcal X}}
\newcommand{\KK}{\ensuremath{\mathcal K}}
\newcommand{\NN}{\ensuremath{\mathbb N}}
\newcommand{\prox}{\ensuremath{\operatorname{prox}}}
\newcommand{\rh}{\ensuremath{{\mathrm  a}}}
\newcommand{\E}{\ensuremath{\mathsf{E}}}
\newcommand{\FF}{\ensuremath{\EuScript F}}
\newcommand{\Id}{\ensuremath{\operatorname{Id}}}
\newtheorem{theorem}{Theorem}[section]
\theoremstyle{plain}{\theorembodyfont{\rmfamily}
}
\theoremstyle{plain}{\theorembodyfont{\rmfamily}
}
\theoremstyle{plain}{\theorembodyfont{\rmfamily}
\newtheorem{algorithm}[theorem]{Algorithm}}
\theoremstyle{plain}{\theorembodyfont{\rmfamily}
}
\theoremstyle{plain}{\theorembodyfont{\rmfamily}
\newtheorem{problem}[theorem]{Problem}}
\theoremstyle{plain}{\theorembodyfont{\rmfamily}
\newtheorem{remark}[theorem]{Remark}}
\theoremstyle{plain}{\theorembodyfont{\rmfamily}
}
\numberwithin{equation}{section}
\begin{document}
\title{\sffamily 
Stochastic Inertial primal-dual algorithms}
\author{Lorenzo Rosasco$^{1,2}$, Silvia Villa$^2$ and
B$\grave{\text{\u{a}}}$ng C\^ong V\~u$^2$
\\[5mm]
\small
\small $\!^1$ DIBRIS, Universit\`a degli Studi di Genova\\
\small  Via Dodecaneso 35, 16146, Genova, Italy\\
\small \ttfamily{lrosasco@mit.edu}
\\[5mm]
\small
\small $\!^2$ LCSL, Istituto Italiano di Tecnologia\\
\small and Massachusetts Institute of Technology,\\
\small Bldg. 46-5155, 77 Massachusetts Avenue, Cambridge, MA 02139, USA\\
\small \ttfamily{\{Silvia.Villa,Cong.Bang\}@iit.it}
}
\date{~}

%


\maketitle

\begin{abstract}
We propose and study a  novel stochastic inertial primal-dual approach to solve composite optimization 
problems. These latter problems arise naturally  when learning  with penalized regularization schemes.
Our analysis provide convergence results  in a general setting, that allows to analyze in a unified framework
a variety of special cases of interest. Key in our analysis is considering the framework of  splitting algorithm for 
solving a monotone inclusions in  suitable product spaces and   for a specific choice of preconditioning operators. 
\end{abstract}

\section{Introduction}
Incorporating prior information about the problem at hand is  key to learn from complex high dimensional data. 
In a variational regularization framework, a learning solution is found solving a composite optimization problem, given by an error term and a suitable {\em regularizer} \cite{SteiChri08}. 
It is the design of this latter term that allows to incorporate the prior information available. Indeed, this observation has recently lead to the study 
of vast families of regularizers \cite{BacJenMai12,ZenFig14}.

From an optimization perspective, the problem arises of devising  strategies to solve optimization problems induced 
by  general regularizers (and  error terms). While such problems might in general be non smooth, the
composite structure (the functional to be minimized is  a sum of terms composed with linear operators) 
can be exploited  considering  splitting techniques \cite{livre1,MRSVV10}. 
In particular, first order primal-dual methods have  been recently applied to a variety machine learning and  signal processing  
problems, and shown to provide state of the art results  in large scale  composite optimization problems \cite{ChaPoc11,esser2010general}.
Interestingly, the convergence of most of these methods can be analyzed within  a common framework. Indeed, 
many different algorithms can be seen as instances of a splitting approach for 
solving, so called,  monotone inclusions in  suitable product spaces and   for a specific choice of preconditioning operators. 
Taking this perspective a  unified convergence analysis  can be established  in a  Hilbert space setting. 
The price payed for this generality is that rates of convergence are not  be possible to obtain \cite{livre1}.  

In this paper,  we are interested in developing stochastic extensions of inertial primal-dual approaches for composite optimization.  
This  question is of interest when  only an uncertain/partial knowledge of the  functional to be minimized \cite{KusYin97} is available, but  
also  to consider randomized approaches to deterministic optimization problems. 
While there a few recent  studies deal with the analysis of stochastic primal dual methods in the learning setting for specific
problems \cite{ShaZha13,bianchi2014stochastic}, we are not aware of any study of the general stochastic and inertial versions 
of the primal-dual methods proposed in this paper.   Our main result is a convergence theorem for inertial stochastic forward-backward 
splitting algorithms with preconditioning. 

This point of view allows to directly get as corollaries convergence results for a wide class of optimization methods, 
some of them already known and used, and some of them new. 
In particular, in the proposed methods, stochastic estimates of the gradient of the smooth components are allowed, 
and both the proximity operators of the involved regularization terms and the involved  linear operators are activated independently and without inversions. 
From a technical point of view, our analysis has  three main features: 1) we consider convergence of the iterates (there is not an analogous of function 
values in the general setting) in a Hilbert space;  and 2) the step-size is bounded from below; this latter condition naturally leads to more stable 
implementations, since vanishing step-sizes create numerical
instabilities, however it  requires a vanishing condition on the stochastic errors; 3) we consider an inertial step, that in minimization cases lead to 
better convergence rates \cite{beck09}. 

The rest of the paper is organized as follows. In Section~\ref{sec:setting} we describe the setting, and some possible choices of regularization terms.
Moreover we show how the need of studying monotone inclusions naturally arise starting from minimization problems. In Section~\ref{sec:fb} we
introduce the stochastic inertial forward-backward algorithm with preconditioning and state its convergence properties. The derivation
of the novel primal-dual schemes, and the comparison with existing methods can be found in Section~\ref{sec:pd}. Finally, in Section~\ref{sec:exp}
we discuss the results of some numerical simulations. The proofs of our statements is deferred to the Appendix.

\section{Setting}
\label{sec:setting}
We consider the generalized learning model. 
Let  $\Xi$ be a measurable space  and assume there is a probability
measure $\rho$ on $\Xi$. Let $N\in\mathbb{N}^*$. The measure $\rho$ is fixed but known only 
through a training set $ (\xi_i)_{1\leq i\leq N} \in \Xi^N$ of 
samples i.i.d with respect to $\rho$.  Consider a  hypothesis space $\HH$, 
a bounded positive self-adjoint linear operator $V\colon\HH\to\HH$, and a 
loss function $\ell:\Xi\times \HH\to \left[0,+\infty\right[$. Suppose that $\ell$ 
has a Lipschitz continuous second partial derivative 
in the sense that there exists $\beta>0$
such that, for every $\xi\in\Xi$ and for every  $(w_1,w_2)\in\HH^2$, 
\begin{equation}
\label{cos1}
\big \|\nabla_{\!w} \ell(\xi,w_1) -\nabla_{\!w}\ell(\xi,w_2)\big\|\leq (1/\beta) \|w_1-w_2\|.
\end{equation}
Let $f\colon\HH\to\mathbb{R}$ be convex and lower semicontinuous. 
For every $j\in\{1,\ldots,s\}$, let $\mathcal{G}_j$
be a Hilbert space,  let $g_j\colon\mathcal{G}_j\to\left[0,+\infty\right]$ be a convex and lower 
semicontinuous function, and let $D_j\colon\HH\to \mathcal{G}_j$ 
be a linear and bounded operator. 
A key problem in this context is 
\begin{equation}
\label{e:prob}
\underset{w\in \HH}{\text{minimize}}\;\mathsf{E}\left[\ell(\xi,w)\right]+
f(w)+\sum_{j=1}^sg_j(D_j w),
\end{equation}
where expectation can be taken both with respect to $\rho$ or with respect to a uniform measure on the
training set. In the first case we obtain the regularized learning
 problem, and in the latter case we get the regularized empirical risk minimizaton problem, since
 for every $w\in\HH$,
 \begin{equation}
 \mathsf{E}\left[\ell(\xi,w)\right]=\frac 1 N\sum_{i=1}^N \ell(\xi_i,w).
  \end{equation}
Supervised learning problems correspond to the case where $\Xi=\X\times\Y$, the 
training set is $(\xi_i)_{1\leq i\leq N}= (x_i,y_i)_{1\leq i\leq N} \in (\mathcal{X}\times\mathcal{Y})^N$, $\HH$ is a
reproducing Hilbert space of functions, and, 
for every $((x,y),w)\in\Xi\times\HH$, $\ell(x,y,w)=L(y,w(x))$ for some loss function
$L\colon\Y\times\Y\to \left[0,+\infty\right[$.

The algorithms studied in this paper, can be used to directly solve the regularized {\em expected}  
loss  minimization problem~\eqref{e:prob} or  to  solve the regularized {\em empirical}  risk minimization 
problem.

The  term $\sum_jg_j\circ D_j$ can be seen as a 
regularizer/penalty encoding some prior information about the learning problem. Examples of convex, non-differentiable penalties 
include sparsity inducing penalties such as the $\ell_1$ norm, as well as more complex structured sparsity penalties \cite{MRSVV10,RosVilMos13}. 
\subsection{Structured sparsity} 
Consider the empirical risk corresponding to a linear regression problem on $\mathbb{R}^d$ with the square loss function, for 
a given training set $(x_i,y_i)_{1\leq i\leq N} \in (\mathbb{R}^d\times\mathbb{R})^N$
\begin{equation}
\label{e:model}
w\in\mathbb{R}^d\mapsto \frac{1}{N}\sum_{i=1}^N(\langle w,x_i\rangle-y_i)^2 +f(w)+\sum_{j=1}^sg_j( D_j w).
\end{equation}
Several well-known regularization strategies used in machine learning can be written as in \eqref{e:model},
for suitable convex and lower semicontinuous functions $f\colon\mathbb{R}^d\to\left[0,+\infty\right[$ and $g_j$, and linear 
operators $D_j$. For example, fused lasso regularization corresponds to  $f=\|\cdot\|_1$ and, for every $j\in\{1,\ldots,d-1\}$
$g_j\colon\mathbb{R}\to\mathbb{R},\; g_j=|\cdot|$, that has to be composed with $D_j\colon\mathbb{R}^d\to\mathbb{R}$, $D_jw=w_{j+1}-w_{j}$ \cite{TibSauRos05}.
In case of  group sparsity, we assume a collection $\{G_1,\ldots,G_s\}$ of subsets of $\{1,\ldots,d\}$ is given such that $\cup_{j=1}^s G_j=\{1,\ldots,d\}$.
A popular regularization term is $\ell^1/\ell^q$ regularization, for $q\in\left[1,+\infty\right]$. This can be obtained in our framework choosing
\[
f=0,\quad g_j=d_j \|\cdot\|_q^{1/q}, \quad D_j\colon\mathbb{R}^d\to\mathbb{R}^d, 
\]
with $\|\cdot\|_q$ the $\ell^q$ norm, and $D_j$ the canonical projection on the subspace $\{w\in\mathbb{R}^d\,:\, w_k=0\; \forall k\not\in G_j\}$ and
$(d_j)_{1\leq j\leq s}\in\mathbb{R}^s$ a vector of weights. Various grouped norms, such as graph lasso, or hierarchical  group lasso penalties,  
can be recovered choosing appropriately the groups $G_1,\ldots,G_s$ \cite{BacJenMai12}. 
The OSCAR penalty \cite{BonRei07}, which can be used as regularizer when it is known that the components of 
the unknown signal exhibit structured sparsity, but a group structure is not a priori known, can be included in our model.
More precisely, it is possible to set $f(w)=\lambda_1\|w\|_1+\lambda_2\sum_{i<j} \max\{|w_i|,|w_j|\}$.  This leads to the proximal
splitting methods as those proposed in \cite{ZenFig14}. Note that this approach would require the computation of the proximity operator
of $f$, which is not straightforward. An alternative approach  is to set $f=\lambda_1\|\cdot\|$, and, for every $(i,j)\in\{1,\ldots,d\}^2$ with $i<j$, define
 $D_{ij}\colon\mathbb{R}^d\to\mathbb{R}^2$, acting as $D_{ij}w=(w_i,w_j)$, and $g_{ij}\colon\mathbb{R}^2\to\left[0,+\infty\right[$, such that $g_{ij}(u)=\|u\|_{\infty}$.
With this choice, the algorithms developed 
in this work can be used to derive stochastic primal-dual proximal splitting methods, which differs from the ones treated in \cite{ZenFig14} and are novel also
in the deterministic case. In particular, they require only the computation of
the proximity operator of the conjugate of the function $g_{ij}$ which is the projection on the $\ell^1$ ball in $\mathbb{R}^2$. 
Latent group lasso formulations and, more generally, structured sparsity penalties defined as infimal convolutions \cite{MauPon12,VilRosMos14}, 
can also be treated with analogous definitions of $g_j$ and $D_j$.
We also mention that multiple kernel learning problems are also included in our framework \cite{MRSVV10, BacJenMai12}.

\subsection{From Problem \eqref{e:prob} to monotone inclusions}
Set 
\[
F=\mathsf{E}\left[\ell(\xi,\cdot)\right].
\]
The primal-dual methods proposed in this paper are based on the idea that problem~\eqref{e:prob} can be 
formulated as a saddle point problem
\begin{equation}
\label{e:pridu}
\min_{w\in\HH}\sup_{(v_1,\ldots,v_s)\in \GG_1\times\ldots\times G_s}F(w)+f(w)+\sum_{j=1}^s \left(\scal{D_j^*v_j}{w} -g_j^*(v_j)\right).
\end{equation}
If strong duality holds, then \cite[Proposition 19.18(v)]{livre1} implies that every solution 
$(\overline{w},\overline{v}_1,\ldots,\overline{v}_s)\in\HH\times\GG_1\times\ldots\times \GG_s$ of \eqref{e:pridu} 
satisfies
\begin{equation}
\label{e:moinc}
\begin{cases}
0\in \nabla F(w)+\partial f(w)+\sum_{j=1}^sD^*_j \overline{v}_j\\
0\in -D_j\overline{w}+\partial g_j^*(\overline{v}_j)\quad \forall j\in\{1,\ldots,s\}\\
\end{cases}
\end{equation}
We denote by $\mathcal{P}\times\mathcal{D}$ the set of solutions of \eqref{e:moinc}. 
In \eqref{e:pridu}, $\cdot^*$ denotes the adjoint of a linear operator and the conjugate of the function $g_j$ (see e.g. \cite{livre1} for the definition).
Let us define ${\GG}=\mathcal{G}_1\times\ldots\times\GG_s$, let $D\colon\HH\to{\GG}$, $(\forall w\in\HH)\;Dw=(D_1w,\ldots,D_sw)$
and $g\colon{\GG}\to[0,+\infty]$, $\forall v=(v_1,\ldots,v_s)\;$ $g(v)=\sum_{j=1}^s g_j(v_j)$.  We can rewrite the inclusion in \eqref{e:moinc} in a more compact form
in the space $\HH\times\GG$, as 
\begin{equation}
\label{eq:moincc}
(0,0) \in (\nabla F(\overline{w}),0)+(\partial f(\overline{w})+D^*\overline{v},  -D{\overline{w}}+\partial g^*(\overline{v})).
\end{equation}
The previous formulation leads to the study of a more general class of problems, which retain the same
key properties of the operators in \eqref{eq:moincc}. 
\begin{problem}
\label{problem}
Let $\mathcal{K}$ be a Hilbert space, let $A\colon\mathcal{K}\to 2^{\mathcal{K}}$ be a maximally monotone (multivalued) operator, and
let $B\colon\mathcal{K}\to\mathcal{K}$  be $\beta$-cocoercive for some $\beta\in\left]0,+\infty\right[$. The problem is to find 
$\overline{z}\in\mathcal{K}$ such that
\begin{equation}
\label{e:moincgen}
0\in (A+B)(\overline{z})
\end{equation}
under the assumption that the set of solutions  $\mathcal{P}$ of inclusion~\eqref{e:moincgen} is nonempty.
\end{problem}
We recall that an operator $A\colon\mathcal{K}\to 2^{\mathcal{K}}$ is maximally monotone if it is monotone, 
namely for every $v_1\in Az_1$ and $v_2\in Az_2$ in $\mathcal{K}$, $\langle v_1-v_2, z_1-z_2\rangle\geq 0$, and there is
not a monotone operator whose graph properly contains the graph of $A$.
An operator $B\colon\mathcal{K}\to\mathcal{K}$ is $\beta$-cocoercive if, for every $z_1$ and $z_2$ in $\mathcal{K}$  
\[
\scal{z_1-z_2}{Bz_1-Bz_2}\geq \beta \|Bz_1-Bz_2\|^2.
\]
The imposed structure allows to apply a forward-backward algorithm to the monotone inclusion in~\eqref{e:moincgen}.
Moreover, if in \eqref{eq:moincc} we define,
\begin{align*}
A& \colon (w,v)\in \HH\times \GG\mapsto (\partial f({w})+D^*{v},  -D{{w}}+\partial g^*({v}))\\
B&  \colon (w,v)\in \HH\times \GG\mapsto (\nabla F({w}),0)
\end{align*}
we get that $A$ is maximally monotone since it is the sum of a subdifferential operator (which is maximally monotone)
and a skew operator \cite[Example 20.30]{livre1}. Moreover, $B$ is cocoercive by the Baillon-Haddad theorem, 
since the gradient is assumed Lipschitz continuous. 
In the determistic case it has been shown that, by properly choosing a  metric on the product space
$\HH\times\GG$ different primal-dual algorithms for solving problem~\eqref{e:prob} can be derived
in this way \cite{ComPes12,ComVu12,Con13}.  Inertial versions of forward-backward algorithms for monotone inclusions
have been considered in \cite{LorPoc15} and their convergence has been proved. 

In the following sections we will show how to extend the analysis to the case when we have access only to a stochastic 
estimate of the operator $B$, obtaining as a result different stochastic inertial primal-dual schemes to solve
problem~\eqref{e:prob}. Key tools in the following sections will be $(I+A)^{-1}$, which is called resolvent of $A$ and is 
defined everywhere and single valued if $A$ is maximally monotone and the proximity operator, that is the
resolvent of the subdifferential of a convex function. 

\section{Stochastic Inertial Forward-backward splitting method for solving monotone inclusions}
\label{sec:fb}

While stochastic proximal gradient methods have been studied in several papers (see e.g. \cite{AtcForMou14,Duchi09,RosVilVu14a}), 
there are only  two recent preprints studying convergence of stochastic forward-backward algorithms
for monotone inclusions \cite{ComPes14,RosVilVu14}. In this section we take another step in filling the gap between the existing analysis
in  the deterministic setting \cite{ComVu12,LorPoc15} and the one available in the stochastic one. More precisely, 
we deal with stochastic inertial variants with preconditioning.
\begin{algorithm}
\label{algo}
In the setting of Problem \ref{problem}, let $U\colon\KK\to\KK$ be a self-adjoint and strongly positive operator. 
Let $\varepsilon\in\left]0,\min\{1,\beta\|U\|^{-1}\}\right[$,
let $(\gamma_n)_{n\in\NN}$ be a sequence in 
$\left[\varepsilon,(2-\varepsilon)\beta\|U\|^{-1}\right]$, 
and  let $(\alpha_n)_{n\in\NN}$ be a sequence in  $\left[0,1-\varepsilon\right]$.
Let $(r_{n})_{n\in\NN}$  be  a $\HH$-valued, square integrable random process,
let $w_{0}$ be  a $\HH$-valued, squared integrable random variable and set $w_{-1}=w_{0}$. 
Furthermore,
set 
\begin{equation}
\label{e:main1*}
(\forall n\in \NN)\quad
\begin{array}{l}
\left\lfloor
\begin{array}{l} 
z_{n} = w_{n} + \alpha_n(w_{n}-w_{n-1})\\
w_{n+1}= J_{\gamma_n U A}(z_{n}-  \gamma_n U r_{n}).
\end{array} 
\right.\\[2mm]
\end{array}
\end{equation}
\end{algorithm}

The first step of the algorithm is the inertial one, where a combination of the
last two iterates is taken. 
The operator $U$ is a preconditioner. While for general choices
of $U$, the resolvent operator $J_{\gamma_n UA}$ is not computable in closed
form, for suitable choices it allows to derive the above mentioned primal dual schemes.
In particular, we will see in the subsequent sections that $U$ will be built starting from the linear operators 
$(D_k)_{1\leq k\leq s}$.
When $r_n=Bz_n$, we are back to the deterministic inertial forward-backward algorithm 
which has been studied in \cite{LorPoc15} (see also \cite{MouOli03}).
Therefore, Algorithm~\ref{algo} is a preconditioned stochastic inertial forward-backward
method. To get convergence results, we need to impose restrictions on
the stochastic approximations of  $Bz_n$ and on the choice of the sequence $(\alpha_n)_{n\in\NN}$. 

\begin{theorem}
\label{t:1} 
Consider Algorithm~\ref{algo}, and set $(\forall n\in\NN)\; \FF_n = \sigma(w_0,\ldots,w_n)$.  
Suppose that  the following conditions are satisfied.
\begin{enumerate}
\item\label{cond:one1} 
$ (\forall n\in \NN)\;
\E[r_{n}| \FF_n] = Bw_{n}
$ a.s.
\item 
\label{cond:two2} 
$\sum_{n\in\NN}\E[ \|  r_{n}-Bw_{n}\|^2| \FF_n] < +\infty$ a.s.
\item\label{cond:three3}
$\sup_{n\in\NN}\| w_{n}-w_{n-1}\| <\infty $ a.s.
and $\sum_{n\in\NN}\alpha_n < +\infty $ a.s.
\end{enumerate}
Then, the following hold for some  a.s. $\mathcal{P}$-valued random variable 
$\overline{w}$.
\begin{enumerate}
\item \label{t:1i} $w_{n}\rightharpoonup \overline{w}$ a.s.
\item\label{t:1ia} $Bw_{n}\to B\overline{w}$ a.s.
\item\label{t:1ii}
 If $B$ is uniformly monotone at $\overline{w}$, then
$\|w_n-\overline{w}\|\to 0$ a.s.
\end{enumerate}
\end{theorem}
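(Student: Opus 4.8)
The plan is to recast Algorithm~\ref{algo} as a stochastically perturbed inertial forward--backward iteration in $\KK$ equipped with the equivalent inner product $\langle\cdot,\cdot\rangle_{U^{-1}}:=\langle U^{-1}\cdot,\cdot\rangle$, and then to run a stochastic quasi-Fej\'er argument. With respect to this metric $\gamma_n UA$ is maximally monotone, so $J_{\gamma_n UA}$ is firmly nonexpansive, and $UB$ is $\beta\|U\|^{-1}$-cocoercive; moreover $\mathcal P=\zer(A+B)$ coincides, for every $n$, with $\Fix\big(J_{\gamma_n UA}(\Id-\gamma_n UB)\big)$, i.e.\ $\overline z\in\mathcal P\iff\overline z=J_{\gamma_n UA}(\overline z-\gamma_n UB\overline z)$. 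Two elementary facts will be used throughout: $B$ is $\beta^{-1}$-Lipschitz (Cauchy--Schwarz applied to cocoercivity), and $B$ is \emph{constant} on $\mathcal P$ (if $\overline z_1,\overline z_2\in\mathcal P$, monotonicity of $A$ gives $\langle\overline z_1-\overline z_2,B\overline z_1-B\overline z_2\rangle\le0$, which with cocoercivity forces $B\overline z_1=B\overline z_2$). Set $b_n:=Bw_n-Bz_n$ and $c_n:=r_n-Bw_n$. By Lipschitz continuity and Condition~\ref{cond:three3}, $\|b_n\|\le\beta^{-1}\alpha_n\|w_n-w_{n-1}\|$, so $\sum_n(\|b_n\|+\|b_n\|^2)<\infty$ a.s.; by Conditions~\ref{cond:one1}--\ref{cond:two2}, $\E[c_n\mid\FF_n]=0$ and $\sum_n\E[\|c_n\|^2\mid\FF_n]<\infty$ a.s., whence (by the standard fact that $\sum_n\E[Y_n\mid\FF_n]<\infty$ a.s.\ implies $\sum_n Y_n<\infty$ a.s.\ for nonnegative adapted $Y_n$) $\sum_n\|c_n\|^2<\infty$ and $c_n\to0$ a.s.

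For the core estimate, fix a deterministic $\overline z\in\mathcal P$. Applying the \emph{symmetric} form of firm nonexpansiveness of $J_{\gamma_n UA}$ at $z_n-\gamma_n Ur_n$ and $\overline z-\gamma_n UB\overline z$ yields
\[
\|w_{n+1}-\overline z\|_{U^{-1}}^2\le\big\|(z_n-\overline z)-\gamma_n U(r_n-B\overline z)\big\|_{U^{-1}}^2-\big\|(z_n-w_{n+1})-\gamma_n U(r_n-B\overline z)\big\|_{U^{-1}}^2 .
\]
I would then substitute $r_n-B\overline z=(Bz_n-B\overline z)+b_n+c_n$ and expand the first norm: its ``exact'' part $\|(z_n-\overline z)-\gamma_n U(Bz_n-B\overline z)\|_{U^{-1}}^2$ is bounded, using cocoercivity and $\gamma_n\in[\varepsilon,(2-\varepsilon)\beta\|U\|^{-1}]$, by $\|z_n-\overline z\|_{U^{-1}}^2-\kappa\|Bz_n-B\overline z\|^2$ with $\kappa:=\varepsilon^2\beta>0$; the cross term with $c_n$ pairs $c_n$ with the $\FF_n$-measurable vector $(z_n-\overline z)-\gamma_n U(Bz_n-B\overline z)$, hence has zero $\FF_n$-conditional expectation; the remaining terms (the $b_n$ cross term and the squares of $b_n,c_n$) are, after $2st\le s^2+t^2$ and $\|Bz_n-B\overline z\|\le\beta^{-1}\|z_n-\overline z\|$, bounded by $\chi_n\|z_n-\overline z\|_{U^{-1}}^2+\mu_n$ with $\sum_n\chi_n<\infty$ and $\sum_n\E[\mu_n\mid\FF_n]<\infty$ a.s. Using finally $\|z_n-\overline z\|_{U^{-1}}^2\le(1+\alpha_n)\|w_n-\overline z\|_{U^{-1}}^2+(\alpha_n+\alpha_n^2)\sup_k\|w_k-w_{k-1}\|_{U^{-1}}^2$ and taking $\E[\cdot\mid\FF_n]$, we arrive at
\[
\E\big[\|w_{n+1}-\overline z\|_{U^{-1}}^2\mid\FF_n\big]\le(1+\zeta_n)\|w_n-\overline z\|_{U^{-1}}^2-\kappa\|Bz_n-B\overline z\|^2-\E\big[\|(z_n-w_{n+1})-\gamma_n U(r_n-B\overline z)\|_{U^{-1}}^2\mid\FF_n\big]+\eta_n
\]
with $\sum_n\zeta_n<\infty$ and $\sum_n\eta_n<\infty$ a.s. The Robbins--Siegmund theorem then gives, a.s., that $(\|w_n-\overline z\|_{U^{-1}})_n$ converges and that $\sum_n\|Bz_n-B\overline z\|^2<\infty$ and $\sum_n\|(z_n-w_{n+1})-\gamma_n U(r_n-B\overline z)\|_{U^{-1}}^2<\infty$. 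Hence $Bz_n\to B\overline z$ a.s.; since $r_n-B\overline z=(Bz_n-B\overline z)+b_n+c_n\to0$, also $z_n-w_{n+1}\to0$ a.s.; and since $z_n-w_n=\alpha_n(w_n-w_{n-1})\to0$ a.s.\ (Condition~\ref{cond:three3}), we get $w_{n+1}-w_n\to0$ a.s.\ and, because $Bw_n-Bz_n=b_n\to0$, $Bw_n\to B\overline z$ a.s.

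To prove (i), we may assume $\KK$ separable (the iterates a.s.\ lie in a separable subspace), so the closed convex set $\mathcal P$ has a countable dense subset $D$; on a single a.s.\ event the above holds for every $\overline z\in D$, hence, by continuity of $\overline z\mapsto\|w_n-\overline z\|_{U^{-1}}$, $\lim_n\|w_n-\overline z\|_{U^{-1}}$ exists for every $\overline z\in\mathcal P$. Next, every weak sequential cluster point of $(w_n)$ belongs to $\mathcal P$ a.s.: from $w_{n+1}=J_{\gamma_n UA}(z_n-\gamma_n Ur_n)$ we have $a_n:=\gamma_n^{-1}U^{-1}(z_n-w_{n+1})-r_n\in Aw_{n+1}$, so
\[
a_n+Bw_{n+1}=\gamma_n^{-1}U^{-1}(z_n-w_{n+1})+(Bw_{n+1}-Bw_n)-c_n\longrightarrow0\quad\text{a.s.},
\]
using $z_n-w_{n+1}\to0$, $\gamma_n\ge\varepsilon$, $w_{n+1}-w_n\to0$ with $B$ Lipschitz, and $c_n\to0$; if $w_{n_k}\weakly\overline w^{\star}$ then $w_{n_k+1}\weakly\overline w^{\star}$ too, and since $A+B$ is maximally monotone its graph is closed for the weak$\times$strong topology, so $0\in(A+B)\overline w^{\star}$, i.e.\ $\overline w^{\star}\in\mathcal P$. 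The Opial-type lemma for quasi-Fej\'er sequences now produces an (a.s.\ $\mathcal P$-valued) random variable $\overline w$ with $w_n\weakly\overline w$ a.s., which is (i).

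Assertion (ii) is then immediate: $Bw_n\to B\overline z$ for the fixed $\overline z\in\mathcal P$, and $B$ constant on $\mathcal P$ together with $\overline w\in\mathcal P$ gives $B\overline z=B\overline w$, so $Bw_n\to B\overline w$ a.s. For (iii), uniform monotonicity of $B$ at $\overline w$ provides an increasing $\phi\colon\RP\to\RPX$ vanishing only at $0$ with $\langle z_n-\overline w,Bz_n-B\overline w\rangle\ge\phi(\|z_n-\overline w\|)$; since $z_n\weakly\overline w$ (because $z_n-w_n\to0$) while $Bz_n=Bw_n-b_n\to B\overline w$ strongly, the left-hand side tends to $0$, forcing $\phi(\|z_n-\overline w\|)\to0$, hence $\|z_n-\overline w\|\to0$ and therefore $\|w_n-\overline w\|\to0$ a.s. The only genuinely delicate step is the derivation of the quasi-Fej\'er inequality. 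One must (a) use the symmetric firm-nonexpansiveness estimate precisely so that the mean-zero increment $c_n$ is tested against an $\FF_n$-measurable vector and disappears under $\E[\cdot\mid\FF_n]$, while its square---summable only by Condition~\ref{cond:two2}---is the term placed in $\eta_n$; (b) absorb the inertial cross term, which a priori involves the not-yet-bounded $\|w_n-\overline z\|$, into the relaxation factor $1+\zeta_n$ via Condition~\ref{cond:three3}; and (c) control the mismatch between $Bw_n$ (for which $r_n$ is unbiased) and $Bz_n$ (which enters the forward step) through Lipschitz continuity of $B$ and, again, Condition~\ref{cond:three3}. The subsequent fixed-point/Opial machinery and the auxiliary probabilistic facts are by now standard (cf.\ \cite{ComPes14,RosVilVu14,LorPoc15}).
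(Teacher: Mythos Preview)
Your argument is correct and follows the same overall strategy as the paper (firm nonexpansiveness of $J_{\gamma_nUA}$ in the $U^{-1}$-norm, cocoercivity of $B$, a stochastic quasi-Fej\'er inequality, then weak--strong closedness of maximally monotone graphs), but it differs from the paper's proof in three places worth recording. First, you decompose $r_n-B\overline z=(Bz_n-B\overline z)+b_n+c_n$ with $b_n=Bw_n-Bz_n$ controlled via Lipschitz continuity of $B$ and Condition~\ref{cond:three3}; the paper's proof instead writes $\E[\langle z_n-w,r_n-Bw\rangle\mid\FF_n]=\langle z_n-w,Bz_n-Bw\rangle$ directly, which tacitly uses $\E[r_n\mid\FF_n]=Bz_n$ rather than the stated hypothesis $\E[r_n\mid\FF_n]=Bw_n$, so your handling is actually the one that matches the theorem as written. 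Second, you absorb the inertial contribution through the crude bound $\|z_n-\overline z\|^2\le(1+\alpha_n)\|w_n-\overline z\|^2+(\alpha_n+\alpha_n^2)\sup_k\|w_k-w_{k-1}\|^2$ and apply Robbins--Siegmund; the paper keeps the exact identity $\|z_n-w\|^2=(1+\alpha_n)\|w_n-w\|^2-\alpha_n\|w_{n-1}-w\|^2+\alpha_n(1+\alpha_n)\|w_n-w_{n-1}\|^2$, retains $\alpha_n\|w_{n-1}-w\|^2$ as a negative term, and invokes the stochastic Fej\'er machinery of \cite{ComPes14}. Both yield the same conclusions here since $\sum_n\alpha_n<\infty$. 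Third, to identify weak cluster points you work directly with $a_n=\gamma_n^{-1}U^{-1}(z_n-w_{n+1})-r_n\in Aw_{n+1}$ and the maximal monotonicity of $A+B$; the paper introduces the auxiliary exact iterate $\overline w_{n+1}=J_{\gamma_nUA}(z_n-\gamma_nUBz_n)$, shows $\|z_n-\overline w_{n+1}\|\to0$, and then closes the graphs of $B$ and of $A$ separately. Your route is shorter and avoids the extra sequence.
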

Condition 1 means that, for every iteration $n$, $r_n$ is an unbiased estimate of $Bw_n$. 
Moreover, Condition 2, requires the variance of the stochastic
approximation to decrease, and in particular to be summable. 
In principle this may seem a strong condition, but it is necessary to derive primal-dual stochastic algorithms.
Indeed, for such derivation, an analysis of  forward-backward with nonvanishing step-size is needed. 
This is a main difficulty to overcome, since even for minimization problems of a smooth function ($A=0$ and $B=\nabla f$ for
some function $f$), it is known that almost sure convergence of the iterates cannot be derived for fixed step-size and 
only  assuming that the variance is bounded, namely $\E[ \|  r_{n}-Bw_{n}\|^2| \FF_n] <\sigma^2$,
and there are explicit counterexamples (see e.g. \cite{KusYin97} and references therein). 
On the other hand, a constant stepsize could be used by using different stochastic approximations of the gradients, 
for instance those of IUG methods \cite{TseYun14}, see also \cite{LerSchBac12}, which indeed use an approximation
of the gradient having a smaller variance. 
In general we can only obtain weak convergence, as it usually happens in infinite dimensional spaces also for the 
deterministic implementations. Strong convergence can be obtained only additional monotonicity assumptions, that
for the case of minimization are related to uniform (or strong) convexity.
The sequence $\alpha_n$ is  required to be summable. Therefore, though the structure of the algorithm includes a stochastic
extension of the well-known Nesterov's accelerated method \cite{nesterov07}, the choice of $\alpha_n=(n-1)/(n+2)$ used in the minimization
setting, is not allowed by our theorem. 
Our methods are new even in the case in which $\alpha_n=0$. In this case there is not an inertial step, and we get the stochastic forward-backward
algorithm studied in \cite{ComPes14} and in \cite{RosVilVu14}. Here we make different assumptions with respect to both papers. Indeed, the 
analysis is in the same setting se in \cite{ComPes14}, but here we require a weaker condition on summability of the errors. With respect to \cite{RosVilVu14},
we removed the strong monotonicity assumptions on the operators, and a non-vanishing stepsize is allowed, but under a stronger conditions on the errors. 
The proof is based on showing that the sequence $(w_n)_{n\in\NN}$ is stochastic quasi-Fej\'er monotone \cite{Er68} with respect to the set 
of solutions $\mathcal{P}$.

\section{Special cases: minimization algorithms}
\label{sec:pd}
We show that the results obtained for the forward-backward algorithm obtained in the previous section
can be used to prove convergence of different classes of primal-dual algorithms, as well as previously known
algorithms for solving problem \eqref{e:prob}, and more generally, problem~\eqref{e:pridu}. 

\subsection{Preconditioned inertial stochastic forward-backward splitting}
In~\eqref{e:moincgen}, set $A=\partial \Big(w\mapsto\sum_{j=1}^s g_j(D_j w)+ f(w)\Big)$, and $r_k=\nabla \ell(w_k, \xi_k)$. 
Then, in this case we recover the inertial forward-backward splitting algorithm \cite{nesterov07,beck09}. 
As mentioned above, the conditions on $(\alpha_n)_{n\in\NN}$ do not allow the standard choices to
be made. Convergence in expectation of the objective function (without preconditioning) 
has been studied in the stochastic setting by several authors, see e.g. \cite{LinChePen14,Lan09,AtcForMou14}. 
We underline that a suitable preconditioning can significantly improve convergence results \cite{ChaPoc11a}.
\subsection{First class of primal-dual stochastic algorithms}
This class of algorithms can be seen as an inertial version of an extension to the stochastic setting of the primal-dual 
deterministic algorithms studied in \cite{Vu13,Con13,ComVu12} for solving problem~\eqref{e:pridu}.
\begin{algorithm}
For every $k\in \{1,\ldots, s\}$, let $W_{k}\colon\GG_k\to\GG_k$ and $V\colon\HH\to\HH$ be self-adjoint and strongly positive. 
Let $\varepsilon \in \left]0,1\right[$,
let $(\alpha_n)_{n\in\NN}$ be a sequence in $\left[0,1-\varepsilon\right]$.
Let $(\rh_{n})_{n\in\NN}$ be a $\HH$-valued, squared integrable random process, let
$w_{0}$  be a $\HH$-valued, squared integrable random vector, and set $w_{-1}=w_{0}$.  
Let
$v_{0}$  be a $\GG$-valued, squared integrable random vector and set $v_{-1}=v_{0}$.
Then, iterate, for every $n\in\NN$,

\label{algopd1}
\begin{equation}
 \begin{array}{|l}
u_{n} = w_{n} + \alpha_n(w_{n}- w_{n-1} )\\
\operatorname{For}\;k=1,\ldots, s\\
\quad
\begin{array}{|l}
 d_{k,n} = v_{k,n} + \alpha_n(v_{k,n}- v_{k,n-1} )\\
 v_{k,n+1}:=\prox^{W_k^{-1}}_{g^*_k}\big(d_{k,n}+W_{k}\big(D_k\big(u_n-2V\big(\sum_{k=1}^s D^*_kd_{k,n}+\rh_n\big)\big)\big)\\
\end{array}
\quad\\
w_{n+1}:=\prox_{f}^{V^{-1}}\big(u_n-V\big(\sum_{k=1}^s D^*_kd_{k,n}+\rh_n\big)\big).
\end{array}
\end{equation} 
\end{algorithm}

In the special case when $V=\tau\Id$ and, for every $k\in\{1,\ldots,s\}$, 
$W_k=\sigma_k\Id$, $\alpha_n=0$ for every $n\in\NN$, and the errors
are not stochastic errors,  Algorithm~\ref{algopd1} recovers the algorithm 
studied in \cite{Vu13} and similar algorithms in \cite{Con13}. It can be immediately 
seen that each proximity operator is activated individually and no inversion of the linear
operator $D$ is required. 

\begin{theorem}
\label{t:2}
In the setting of Algorithm~\ref{algopd1},
assume that
\begin{equation}
\label{e:steps}
\gamma=\Big(1-\Big(\sum_{k=1}^s\|W_k^{1/2}D_kV^{1/2}\|^{2}\Big)^{1/2}\Big)\|V\|^{-1}\beta >\frac1 2 
\end{equation}
and $\varepsilon<\min\{1,\gamma\}$.
Suppose that the following conditions are satisfied:
\begin{enumerate}
\item\label{ct:2i} 
$ (\forall n\in \NN)\;
\E[\rh_{n}| \FF_n] = \nabla F(u_n).
$
\item \label{ct:2ii}
$\sum_{n\in\NN}
\E[ \|\rh_{n}-\nabla F(u_{n}) \|^2
| \FF_n] < +\infty.$
\item\label{ct:2iii}
 $\sup_{n\in\NN}\| w_{n}-w_{n-1}\| < \infty $ a.s.
and  $\max_{1\leq k\leq s}\sup_{n\in\NN}\| v_{k,n}-v_{k,n-1}\|< \infty $ a.s.,
and $\sum_{n\in\NN}\alpha_n < +\infty $.
\end{enumerate}
Then the following hold for some random vector 
$(\overline{w}, \overline{v}_1,\ldots,\overline{v}_s)$,
$\mathcal{P}\times\mathcal{D}$-valued almost surely.
\begin{enumerate}
 \item
\label{t:2i} $w_{n}\rightharpoonup \overline{w}$ and 
$(\forall k\in\{1,\ldots,s\})$\; $v_{k,n}\rightharpoonup\overline{v}_k$ almost surely.
\item \label{t:2ii}
Suppose that the function $F$ 
is uniformly convex at 
$\overline{w}$ almost surely. Then 
$w_{n}\to \overline{w}$ almost surely.  
\end{enumerate}
\end{theorem}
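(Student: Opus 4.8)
\textbf{Proof plan for Theorem~\ref{t:2}.}
The plan is to recognize Algorithm~\ref{algopd1} as an instance of Algorithm~\ref{algo} applied to the monotone inclusion~\eqref{eq:moincc} in the product space $\KK = \HH\times\GG$, and then to invoke Theorem~\ref{t:1}. First I would set, as in Section~\ref{sec:setting}, $A\colon(w,v)\mapsto(\partial f(w)+D^*v,\,-Dw+\partial g^*(v))$ and $B\colon(w,v)\mapsto(\nabla F(w),0)$; recall $A$ is maximally monotone (sum of a subdifferential and a skew bounded operator) and $B$ is $\beta$-cocoercive by Baillon--Haddad, since $\nabla F$ is $(1/\beta)$-Lipschitz (it inherits this from~\eqref{cos1} by differentiating under the expectation). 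The key algebraic step is to choose the preconditioner $U$ so that $J_{UA}$ decouples into the explicit proximal updates of the algorithm. Following \cite{Vu13,ComVu12}, one takes on $\HH\times\GG$ the block operator
\begin{equation*}
U = \begin{pmatrix} V^{-1} & -D^* \\ -D & W^{-1}\end{pmatrix}^{-1},
\end{equation*}
where $W=\operatorname{diag}(W_1,\ldots,W_s)$; the condition~\eqref{e:steps}, namely $\big(\sum_{k}\|W_k^{1/2}D_kV^{1/2}\|^2\big)^{1/2}<1$, is exactly what guarantees that this matrix is self-adjoint and strongly positive, and a direct computation of $\|U\|$ gives the constant $\gamma$ in~\eqref{e:steps} as the admissible value $\beta\|U\|^{-1}$; since $\gamma>1/2$ we may take $\gamma_n\equiv 1\in[\varepsilon,(2-\varepsilon)\beta\|U\|^{-1}]$ for $\varepsilon<\min\{1,\gamma\}$ small, matching the step-size range required by Algorithm~\ref{algo}.

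Next I would verify that, with $z_n=(u_n,d_n)$, $w_n$ in Theorem~\ref{t:1} replaced by the pair $(w_n,v_n)$, and $r_n=(\rh_n,0)$ playing the role of the stochastic estimate of $B(w_n,v_n)=(\nabla F(w_n),0)$, the recursion~\eqref{e:main1*} is precisely the update in Algorithm~\ref{algopd1}. This is the routine but essential computation: one writes out $J_{UA}=(\Id+UA)^{-1}$ using the chosen $U$, uses the identity $\prox^{M^{-1}}_{\varphi}=(\Id+M\partial\varphi)^{-1}$ and the block structure to separate the $w$-update (the $\prox_f^{V^{-1}}$ line) from the $v$-updates (the $\prox^{W_k^{-1}}_{g_k^*}$ lines), the cross terms $-D^*$, $-D$ producing the appearance of $2V(\sum_k D_k^*d_{k,n}+\rh_n)$ inside the $v_{k,n+1}$ formula and of $V(\sum_k D_k^*d_{k,n}+\rh_n)$ in $w_{n+1}$. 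Once this identification is complete, Conditions~\ref{ct:2i}, \ref{ct:2ii}, \ref{ct:2iii} of Theorem~\ref{t:2} translate verbatim into Conditions~\ref{cond:one1}, \ref{cond:two2}, \ref{cond:three3} of Theorem~\ref{t:1} (note $\E[r_n\mid\FF_n]=(\nabla F(u_n),0)$ and $B z_n = (\nabla F(u_n),0)$ since the second component of $B$ is zero, so unbiasedness and summable conditional variance carry over; the boundedness of increments of $(w_n,v_n)$ and summability of $\alpha_n$ are literally Condition~\ref{cond:three3} in the product metric induced by $U$, using that $U$ and $U^{-1}$ are bounded so the $U$-norm and the product norm are equivalent).

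Then Theorem~\ref{t:1}\eqref{t:1i} gives $(w_n,v_n)\weakly(\overline w,\overline v)$ a.s.\ for some $\mathcal{P}\times\mathcal{D}$-valued random vector — here one uses that weak convergence in the $U$-metric is the same as in the original product metric, and that $\zer(A+B)$ coincides with the solution set of~\eqref{e:moinc}, i.e.\ $\mathcal{P}\times\mathcal{D}$ — which is part~\eqref{t:2i}. For part~\eqref{t:2ii}, uniform convexity of $F$ at $\overline w$ means $F$ is uniformly monotone there in the sense that $B$ is uniformly monotone at $\overline z=(\overline w,\overline v)$ along the first component; combined with Theorem~\ref{t:1}\eqref{t:1ia} ($Bw_n\to B\overline w$, i.e.\ $\nabla F(u_n)\to\nabla F(\overline w)$) one applies Theorem~\ref{t:1}\eqref{t:1ii} to conclude $\|(w_n,v_n)-(\overline w,\overline v)\|\to0$ a.s., which in particular yields $w_n\to\overline w$ a.s. I expect the main obstacle to be the bookkeeping in the second paragraph: checking that the preconditioned resolvent $J_{UA}$ with the off-diagonal $U$ really unfolds into the stated two-block proximal recursion (including getting the factor $2$ and the signs right on the coupling terms), and confirming that $B$'s vanishing second block does not disturb the cocoercivity estimate or the uniform-monotonicity argument. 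Everything else is a direct transcription of Theorem~\ref{t:1}.
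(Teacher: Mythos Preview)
Your approach is essentially the paper's own: same product space $\KK=\HH\times\GG$, same $A$ and $B$ from~\eqref{eq:moincc}, same preconditioner $U$ (the inverse of the block operator $(w,v)\mapsto(V^{-1}w-D^*v,\,W^{-1}v-Dw)$), and the same identification of Algorithm~\ref{algopd1} with Algorithm~\ref{algo} at $\gamma_n\equiv 1$, after which Theorem~\ref{t:1} is invoked.

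One point to tighten in your treatment of part~\ref{t:2ii}: since $B(w,v)=(\nabla F(w),0)$ is constant in $v$, the operator $B$ is \emph{not} uniformly monotone at $(\overline w,\overline v)$ on the full product space, so Theorem~\ref{t:1}\ref{t:1ii} does not apply and you cannot conclude $\|(w_n,v_n)-(\overline w,\overline v)\|\to 0$; note that Theorem~\ref{t:2}\ref{t:2ii} only claims $w_n\to\overline w$. The correct route --- which you rightly flagged as a potential obstacle --- is to project to $\HH$: use $w_n\rightharpoonup\overline w$ from part~\ref{t:2i} together with $\nabla F(w_n)\to\nabla F(\overline w)$ coming from Theorem~\ref{t:1}\ref{t:1ia}, and then demiregularity of $\nabla F$ at $\overline w$ (implied by uniform convexity of $F$ there) to obtain $w_n\to\overline w$ strongly in $\HH$.
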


The proof of Theorem~\ref{t:2}, whose sketch can be found in the appendix, starts from the observation
that Algorithm~\ref{algopd1} is an inertial stochastic forward-backward algorithm. 
Such algorithm is applied in $\HH\times\GG$, with $A$ and $B$ as in~\eqref{eq:moincc}, and 
preconditioning operator $U$, which is defined as the inverse of the lines operator
from $\HH\times\GG$ to $\HH\times\GG$, $(w,v)\mapsto(V^{-1}w-D^*v, (W_k^{-1}v_k-D_kw)_{1\leq k \leq s})$.

\begin{remark} 
Uniform convexity of $F$, which is an expectation, follows from uniform convexity of the loss function
with respect to the second variable. More precisely, let $\overline{w}\in\HH$. Suppose that there exists 
$\phi\colon:\left[0,+\infty\right[\to\left[0,+\infty\right]$ increasing and vanishing only at $0$ such that,
for every $\xi\in\Xi$ and for every $w\in\HH$,
\[ 
\ell(\xi,w)\geq \ell(\xi,\overline{w})+\scal{\nabla \ell(\xi,\overline{w})}{w-\overline{w}}+\phi(\|w-\overline{w}\|).
\]
Then $F$ is uniformly convex at $\overline{w}$ with modulus $\Phi$.
\end{remark}

\paragraph{Stochastic inertial Chambolle-Pock algorithm.}
In the special case when $s=1$, $\ell=0$, $V=\tau\Id$ and $W_1=\sigma\Id$, 
Algorithm~\ref{algopd1} is an inertial variant of Algorithm~1 in \cite{ChaPoc11}, 
which can be recovered by setting $\alpha_n=0$.
Since the second inequality in \eqref{e:steps} is always satisfied (in this case 
$\beta$ can be chosen arbitrarily small), the conditions on the stepsize reduce to 
\[
\tau\sigma\|D_1\|^2<1.
\]
Weak convergence of the iterates obtained here does not follow from the analysis
in \cite{ChaPoc11} for Algorithm 2, where the assumptions on the sequence $(\alpha_n)_{n\in\NN}$ 
are the typical ones for accelerated methods. 
A related algorithm, the so called PDHG, has been studied in \cite{zhu2008efficient, esser2010general}, 
which is a deterministic version of the above algorithm, and corresponds to the case $\alpha_n=0$ and 
$\ell=0$.
Finally, a preconditioned version of the primal-dual Algorithm 1 in \cite{ChaPoc11} has been studied in 
\cite{ChaPoc11a}, where the conditions on the preconditioning matrices correspond to the ones in \eqref{e:steps}.

\subsection{Second class}
In this section we suppose $f=0$ in \eqref{e:prob}.

\begin{algorithm}
\label{algopd2}
Let $V\colon\HH\to\HH$ be a bounded linear self-adjoint and strongly positive operator.
For every $k\in \{1,\ldots, s\}$, 
let $W_{k}\colon\GG_k\to\GG_k$ be linear, bounded, self-adjoint, and strongly positive.
Let $\varepsilon \in \left]0,1\right[$, and 
let $(\lambda_n)_{n\in\NN}$ be a sequence 
in $\left[\varepsilon,1\right]$, 
let $(\alpha_n)_{n\in\NN}$ be a sequence in $\left[0,1-\varepsilon\right]$.
let $(\rh_{n})_{n\in\NN}$ be a $\HH$-valued, squared integrable random process, and let
$w_{0}$  be a $\HH$-valued, squared integrable random vector and set $w_{-1}=w_{0}$.  
Let
$v_{0}$  be a $\GG_k$-valued, squared integrable random vector and set $v_{-1}=v_{0}$.
Then, iterate, for every $n\in\NN$,
\begin{equation}
\label{e:Algomain3b}
\begin{array}{|l}
u_{n} = w_{n} + \alpha_n(w_{n}- w_{n-1} )\\
\operatorname{For}\;k=1,\ldots, s\\
\quad
\begin{array}{|l}
d_{k,n} = v_{k,n} + \alpha_n(v_{k,n}- v_{k,n-1} )\\
\end{array}\\
s_{n} =  u_{n}-V\rh_{n} - V\sum_{k=1}^s D_{k}^*d_{k,n}\\
\quad\\
\operatorname{For}\;k=1,\ldots, s\\
\quad
\begin{array}{|l}
q_{k,n}= \prox_{g_{k}^{*}}^{W_{k}^{-1}}\big(d_{k,n}+W_{k}D_{k}s_{n}\big)\\
v_{k,n+1}=v_{k,n}+\lambda_{n}(q_{k,n}-v_{k,n})\\
\end{array}\\
w_{n+1} =  u_{n}-V\rh_{n}-\sum_{k=1}^sD_{k}^*q_{k,n}.
\end{array}
\end{equation} 
\end{algorithm}

\begin{theorem}
\label{mainal1a} 
In the setting of Algorithm~\ref{algopd2},
let $\beta$  be a strictly positive number such that \eqref{cos1} is satisfied.
Assume that $\sum_{k=1}^s\|W_k^{1/2}D_kV^{1/2}\|^2<1$,  that $\beta\|V\|^{-1}>1/2$,
and that $\varepsilon<\min\{1,\beta\}$. Set  $\FF_n = \sigma((w_0,v_0)\ldots,(w_n,v_n))$
and suppose that the following conditions are satisfied:
\begin{enumerate}
\item\label{cond:onei} 
$(\forall n\in \NN)\;
\E[\rh_{n}| \FF_n] =\nabla F(u_n).$
\item \label{cond:twoii}
$\sum_{n\in\NN} \E[ \| \rh_{n}-\nabla F(u_{n}) \|^2|\FF_n] < +\infty.$
\item\label{cond:twoiii}
$\sup_{n\in\NN}\| w_{n}-w_{n-1}\| < \infty $ a.s.,
$\max_{1\leq k\leq s}\sup_{n\in\NN}\| v_{k,n}-v_{k,n-1}\|< \infty $ a.s.,
and $\sum_{n\in\NN}\alpha_n < +\infty $.
\end{enumerate}
Then the following hold for some random vector 
$(\overline{w}, \overline{v}_1,\ldots,\overline{v}_s)$,
$\mathcal{P}\times\mathcal{D}$-valued almost surely.
\begin{enumerate}
 \item
\label{mainal1a:2i} $w_{n}\rightharpoonup \overline{w}$ and 
$(\forall k\in\{1,\ldots,s\})$\; $v_{k,n}\rightharpoonup\overline{v}_k$ almost surely.
\item \label{mainal1a:2ii}
Suppose that the function $F$ 
is uniformly convex at 
$\overline{w}$, then 
$w_{n}\to \overline{w}$ almost surely. 
\end{enumerate}
\end{theorem}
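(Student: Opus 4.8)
The proof follows the same route as that of Theorem~\ref{t:2}: I would show that Algorithm~\ref{algopd2} is an inertial stochastic forward--backward iteration, with a constant stepsize, in the product space $\HH\times\GG$ (with $\GG=\GG_1\times\dots\times\GG_s$), for the monotone inclusion \eqref{eq:moincc} in which $f=0$, and then invoke Theorem~\ref{t:1}. Thus I would apply Problem~\ref{problem} with $\KK=\HH\times\GG$,
\[
\AAA\colon(w,v)\mapsto\Big(\sum_{k=1}^s D_k^*v_k,\;\big(-D_kw+\partial g_k^*(v_k)\big)_{1\le k\le s}\Big),\qquad \BBB\colon(w,v)\mapsto(\nabla F(w),0),
\]
recalling that $\AAA$ is maximally monotone (a subdifferential plus a bounded skew operator, \cite[Example~20.30]{livre1}), that $\BBB$ is $\beta$-cocoercive by Baillon--Haddad, and that $\zer(\AAA+\BBB)=\mathcal{P}\times\mathcal{D}$, assumed nonempty. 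The preconditioner $\UU$ is the inverse of the bounded self-adjoint operator on $\HH\times\GG$ built from $V,(W_k)_k,(D_k)_k$ according to the \emph{sequential} (Gauss--Seidel) coupling visible in \eqref{e:Algomain3b}, i.e.\ the analogue of the lines operator displayed after Theorem~\ref{t:2}, adapted to the order in which the primal and dual updates of Algorithm~\ref{algopd2} are performed. The hypothesis $\sum_{k=1}^s\|W_k^{1/2}D_kV^{1/2}\|^2<1$ ensures, via a Schur-complement argument, that this operator is strongly positive, so that $\UU$ is admissible, and $\beta\|V\|^{-1}>1/2$ is exactly what makes the unit stepsize $\gamma_n\equiv1$ admissible — and it is the sequential coupling, in which the primal gradient step $u_n-V\rh_n-\cdots$ is essentially performed in the metric of $\HH$ alone, that is responsible for this cleaner condition in place of the $\gamma>1/2$ of Theorem~\ref{t:2}.

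Granting the recasting, I would then check that the hypotheses transfer. With $\xi_n:=(w_n,v_{1,n},\dots,v_{s,n})$ and $\rr_n:=(\rh_n,0,\dots,0)$, the filtration $\FF_n=\sigma((w_0,v_0),\dots,(w_n,v_n))$ is precisely the natural filtration of $(\xi_n)$, and Conditions~\ref{cond:onei}--\ref{cond:twoii} say exactly that $\rr_n$ is an unbiased estimate of $\BBB$ (at the relevant point) with square-summable conditional variance, i.e.\ Conditions~\ref{cond:one1}--\ref{cond:two2} of Theorem~\ref{t:1}; the boundedness of the increments $\|w_n-w_{n-1}\|$ and $\|v_{k,n}-v_{k,n-1}\|$ together with $\sum_n\alpha_n<\infty$ in Condition~\ref{cond:twoiii} give $\sup_n\|\xi_n-\xi_{n-1}\|<\infty$ a.s.\ and $\sum_n\alpha_n<\infty$, i.e.\ Condition~\ref{cond:three3}. (The under-relaxation $\lambda_n\in[\varepsilon,1]$ on the dual block is of the same nature as the averaging already built into forward--backward and is absorbed by the quasi-Fej\'er argument.) Theorem~\ref{t:1}\ref{t:1i} then yields $\xi_n\weakly\overline{\xi}$ a.s.\ for some $\mathcal{P}\times\mathcal{D}$-valued random vector $\overline{\xi}=(\overline{w},\overline{v}_1,\dots,\overline{v}_s)$, which is \ref{mainal1a:2i}.

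For \ref{mainal1a:2ii}, Theorem~\ref{t:1}\ref{t:1ii} cannot be applied verbatim, since $\BBB$ is not uniformly monotone on the whole product space. Instead I would revisit the quasi-Fej\'er estimate: uniform convexity of $F$ at $\overline{w}$ makes $\nabla F$ uniformly monotone at $\overline{w}$, so the term $\langle\nabla F(u_n)-\nabla F(\overline{w}),w_n-\overline{w}\rangle$ that appears in that estimate is summable, which — together with the Fej\'er monotonicity already obtained for $(\xi_n)$ — forces $\|w_n-\overline{w}\|\to0$ a.s.

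The main obstacle is the first step: setting up the correct self-adjoint strongly positive operator encoding the sequential primal--dual coupling, verifying by explicit elimination of the auxiliary variables $u_n,(d_{k,n})_k,s_n,(q_{k,n})_k$ that Algorithm~\ref{algopd2} is indeed the corresponding inertial stochastic forward--backward iteration, and — the delicate point — carrying out the cocoercivity bookkeeping so that the weaker stepsize requirement $\beta\|V\|^{-1}>1/2$ suffices, i.e.\ establishing the key Fej\'er-type inequality for $(\xi_n)$ with the sharp constant. Once this is in place, the convergence statements follow from Theorem~\ref{t:1} essentially mechanically, the only genuine addition being the refinement used for the strong-convergence claim.
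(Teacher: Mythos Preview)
Your high-level strategy is exactly the paper's: recast Algorithm~\ref{algopd2} as an instance of Algorithm~\ref{algo} on $\KK=\HH\times\GG$ with $\AAA$ and $\BBB$ as in~\eqref{eq:moincc} (here $f=0$), constant stepsize $\gamma_n\equiv 1$, and then invoke Theorem~\ref{t:1}; your checklist for transferring Conditions~\ref{cond:onei}--\ref{cond:twoiii} to Conditions~\ref{cond:one1}--\ref{cond:three3}, and your refinement for part~\ref{mainal1a:2ii}, are in line with (and in places more explicit than) the paper's sketch.

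The one concrete divergence is the form of the preconditioner. You describe it as a Gauss--Seidel / off-diagonal operator, ``the analogue of the lines operator displayed after Theorem~\ref{t:2}, adapted to the order of the updates''. The paper instead takes the \emph{block-diagonal} operator
\[
T\colon(w,v)\longmapsto\big(Vw,\;(W^{-1}-DVD^{*})^{-1}v\big),\qquad W=\operatorname{diag}(W_1,\dots,W_s)
\]
(the paper writes $D^{*}VD$, an evident typo). The hypothesis $\sum_k\|W_k^{1/2}D_kV^{1/2}\|^{2}<1$ is used precisely to make $W^{-1}-DVD^{*}$, and hence $T$, strongly positive --- your Schur-complement intuition is right, but it enters here, not through an off-diagonal coupling. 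The block-diagonal form is also what explains the clean stepsize condition $\beta\|V\|^{-1}>1/2$: since $\BBB$ vanishes on the dual block, only the primal block $V$ of $T$ is seen by $\BBB$, and cocoercivity in the $T^{-1}$-metric reduces to cocoercivity in the $V^{-1}$-metric on $\HH$; this is what makes $\gamma_n\equiv 1\in\big]\varepsilon,(2-\varepsilon)\beta\|T\|^{-1}\big[$ admissible. So the ``delicate cocoercivity bookkeeping'' you flag as the main obstacle dissolves once the preconditioner is chosen block-diagonal rather than triangular; with $T$ in hand, the algebraic elimination of $s_n$ and $q_{k,n}$ that identifies \eqref{e:Algomain3b} with \eqref{e:main1*} is the remaining (purely computational) step.
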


\paragraph{Generalized forward-backward for nonseparable penalties.}
Algorithm~\ref{algopd2} is a generalization under several aspects of the algorithm in 
\cite[equation (24)]{LorVer11}. Indeed, here we presented a convergence analysis for
a more general objective function, adding stochastic noise and an inertial step.
Moreover, Algorithm~\ref{algopd2} is a stochastic and inertial version of the algorithm 
in \cite[Proposition 4.3]{ComConPes14}.
A special case of Algorithm~\ref{algopd2} has been proposed in \cite{CheHuaZha13}, where $s=1$,
$V=\tau\Id$, and $W_1=\Id$.

\section{Numerical experiments}
\label{sec:exp}
Let $N$ and $p$ be strictly positive integers. 
Concerning the data generation protocol, the input points  $(x_i)_{1\leq i\leq N}$
 are uniformly drawn in the interval $\left[a,b\right]$ (to be specified later in the two cases we consider). 
 For a suitably chosen finite dictionary 
 of real valued functions $(\phi_k)_{1\leq k\leq p}$ 
 defined on $\left[a,b\right]$, the labels are computed using a noise-corrupted regression
 function, namely
\begin{equation}
(\forall i \in \{1,\ldots, N\})\quad y_i =  \sum_{k=1}^{p}\overline{w}_k\phi_k(x_i)+ \epsilon_i,
\end{equation}
where $(\overline{w_k})_{1\leq k\leq p} \in\mathbb{R}^p$ and $\epsilon_i$ is an additive noise $\epsilon_i\sim \mathcal{N}(0,0.3)$.

We will consider a polynomial dictionaryy of functions, i.e. 
$(\forall k\in \{1,\ldots,p\})$ $\phi_k\colon \left[-1,1\right]\to \mathbb{R}$, $\phi_k(x)=x^{k-1}$.
We estimate $\overline{w}$ by
 solving the following regularized minimization problem 
\begin{equation}
\label{e:ff1}
 \underset{ (w_k)_{1\leq k\leq p}\in\mathbb{R}^p}{\text{minimize}} \;\frac{1}{N}
\sum_{i=1}^N\Big(y_i -\sum_{k=1}^{p}w_k\phi_k(x_i)\Big)^2
+ \lambda\sum_{l=1}^s \Big(\sum_{j\in G_l}(w_j)^2\Big)^{1/2}
\end{equation}
where $\lambda$ is a strictly positive parameter.   
Problem  \ref{e:ff1} is a special case of Problem \ref{e:prob}, and hence it can be 
solved by using the stochastic inertial forward-backward  splitting (first class). 
We set 
 \begin{alignat}{2} 
&p=32, \quad s=8,\quad N=48,\quad \gamma_n  = 15/(n+100),\quad \alpha_n = \gamma^2_n,\quad 
 \lambda = 0.02, \notag\\
&\nonumber \overline{w}=[3,2,1,0,1,0,1,2,\!-1,0,0,-2,-1,1,0.5,0,1,0,4,0,\!-2,0,0,\!-2,1.0,1,0,0.2,\!-0.1,0,0,1]\\
&(\forall l\in\{1,\ldots,8\})\; G_l =[4l-3,\ldots,4l+1]
\end{alignat}
Here, we use the variants of the exact gradient for the stochastic gradient as follows 
\begin{equation}
\rh_n = \nabla F(u_n) + \mathcal{N}(0,\mathrm{Sig})/n.
\end{equation}
The resulting regression functions using the stochastic inertial primal-dual splitting (SIPDS) are shown in  Figure \ref{fig: orbit1pc1z1} (right).
To check convergence towards a solution of~\eqref{e:ff1}, we computed a solution of \eqref{e:ff1} by running the  
corresponding deterministic primal-dual  splitting method in \cite{Vu13}  for 5000 iterations. 
%
\vspace{-3cm}
\begin{figure}[!ht]
   \centering     
   \begin{tabular}{cc}
 \includegraphics[width=5.5cm]{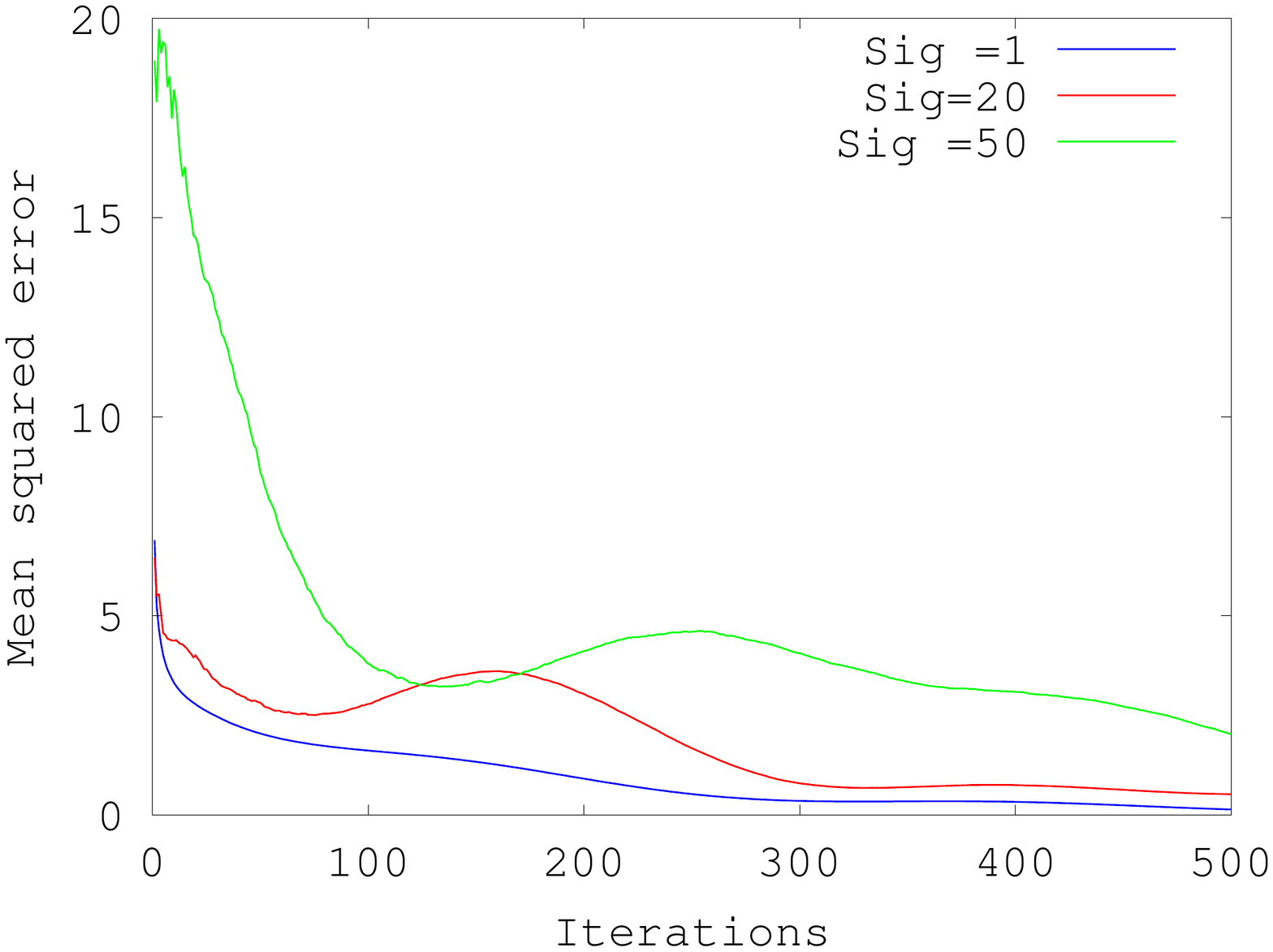}& \includegraphics[width=5.5cm]{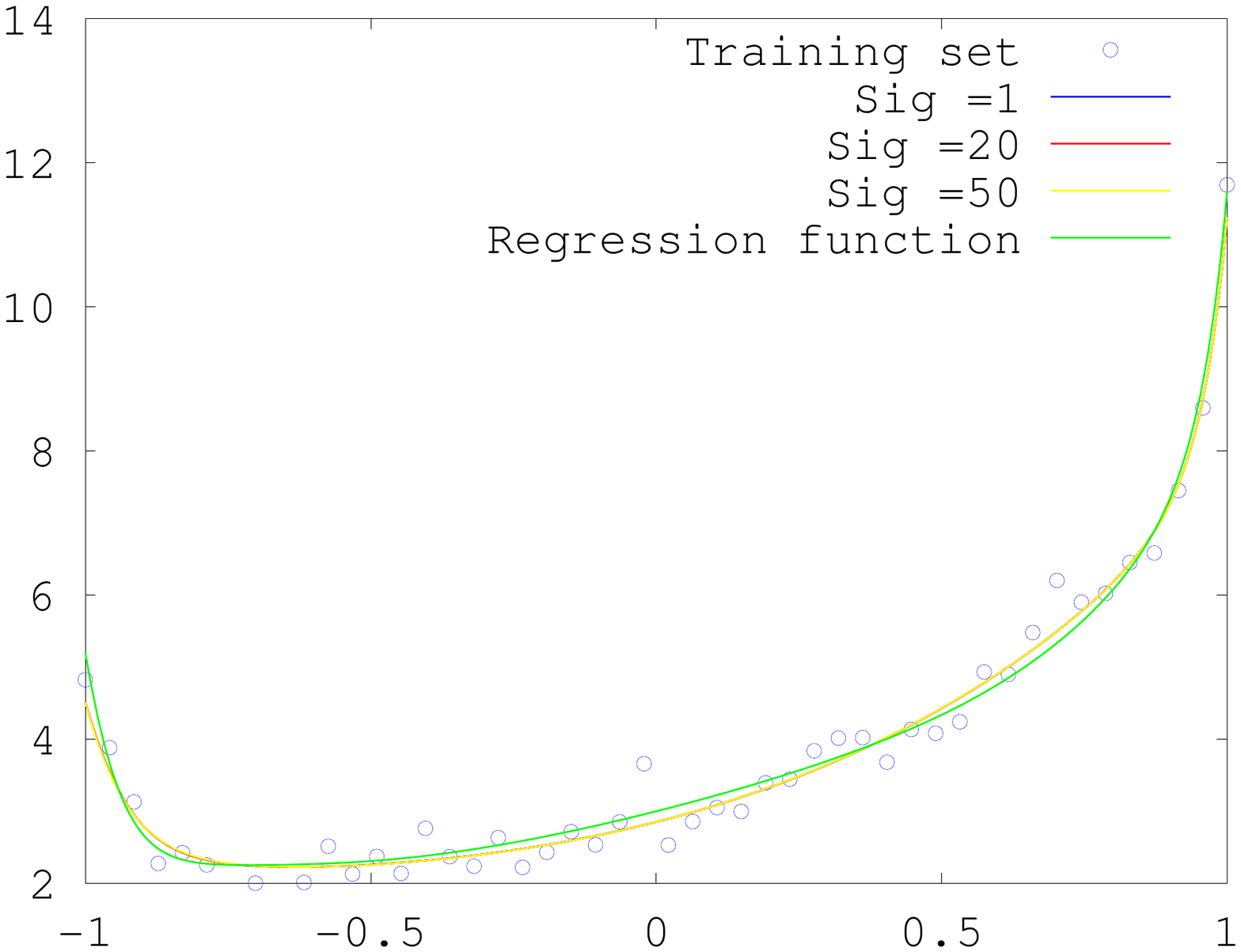}
 \end{tabular}
 \caption{Convergence of the iterates of SIPDS applied to Problem~\ref{e:ff1} (left), and corresponding approximations of regression functions
  (right). 
 \label{fig: orbit1pc1z1}}
 \end{figure}

%

{\small \bibliographystyle{plain}
\bibliography{biblio_colt}}
\appendix
\section{Proofs}
\begin{proof}[Proof of Theorem~\ref{t:1}]
Since $U$ is  self-adjoint and strongly positive, $UA$ is also maximally monotone 
by \cite[Lemma 3.7]{ComVu12}.
Since $B$ is cocoercive and has full domain, therefore it is also maximally monotone \cite[Corollary  20.25]{livre1}. 
Let $w\in\mathcal{P}$ and set
\begin{equation}
\label{e:set1}(\forall n\in \NN)\quad
u_n = z_n-w_{n+1} - \gamma_n U(r_n- Bw).
\end{equation}
Then, we have 
\begin{equation}
(\forall n\in \NN)\quad
w= J_{\gamma_n U A}(w - \gamma_nU Bw).
\end{equation}
We derive from \cite[Lemma 3.7]{ComVu12} that $J_{\gamma_nUA}$ is firmly nonexpansive
with respective to the norm $\|\cdot \|_{V}$, therefore
\begin{alignat}{2}
\label{e:est1}(\forall n\in \NN)\quad
&\|w_{n+1}-w \|_{V}^2 \leq \|z_n-w -\gamma_n U(r_n- Bw) \|_{V}^2
-  \|u_n \|_{V}^2\notag\\
& = \|z_n- w\|_{V}^2 - 2\gamma_{n}\langle{z_n-w},{r_n- B w}\rangle+ \gamma_{n}^2 \|U(r_n- B w) \|_{V}^2 -  \|u_n \|_{V}^2.
\end{alignat}
By 1, since $z_n$ is $\FF_n$-measurable, we have 
\begin{alignat}{2}
\label{e:est2}(\forall n\in \NN)\quad
\E[\langle{z_n- w},{r_n- Bw}\rangle|\FF_n] 
&= \langle{z_n-w},{Bz_n- B w}\rangle.
\end{alignat}
By the same reason, for every $n\in\NN$, since $Bz_n$ is $\FF_n$-measurable, we also have 
\begin{alignat}{2}
\label{e:est3}
\E[\|U(r_n- B w) \||_{V}^2|\FF_n] 
&= \E[\|U(r_n- B z_n) \|_{V}^2|\FF_n] +\|U(Bz_n-Bw) \||_{V}^2\notag\\
&\hspace{1.4cm}+ 2\E[\langle{Bz_n- Bw},{r_n- Bz_n}\rangle|\FF_n ] \notag\\
&=\E[\|U(r_n-Bz_n) \|_{V}^2|\FF_n] +\|U(B z_n- B w) \|_{V}^2\notag\\
&\leq \E[\|U(r_n-B z_n) \|_{V}^2|\FF_n]  +
\|U\|\beta^{-1}\langle{z_n-w},{Bz_n- B w}\rangle,
\end{alignat}
where the last inequality follows from cocoercivity of $B$.
Therefore, for every $n\in\NN$, we derive from~\eqref{e:est1}, ~\eqref{e:est2}  and ~\eqref{e:est3} that 
\begin{alignat}{2}
\E[\|w_{n+1}- w \|_{V}^2|\FF_n]
&\leq \|z_n- w\|_{V}^2 -\varepsilon\gamma_n\langle{z_n- w},{Bz_n- B w}\rangle\notag\\
&\hspace{0.5cm}+\gamma_{n}^2\E[\|U(r_n- B z_n) \|_{V}^2|\FF_n] -\E[\|u_n \|_{V}^2|\FF_n]\notag\\
&\leq \|w_n-w\|_{V}^2 +\alpha_n( \|w_n-w\||_{V}^2 - \|w_{n-1}- w\|_{V}^2)
+\zeta_n-\xi_n\\
&\leq (1 +\alpha_n)( \|w_n-w\||_{V}^2 
+\zeta_n-(\alpha_n \|w_{n-1}- w\|_{V}^2+\xi_n),
\label{e:cc1}
\end{alignat}
with 
\begin{equation}\label{eq:dddd}
(\forall n\in\NN)\quad
\begin{cases}
\zeta_n = 2\alpha_n \|w_{n-1}-w_n\||_{V}^2 + \gamma_{n}^2\E[\|U(r_n- B z_n) \|_{V}^2|\FF_n] \notag\\
\xi_n=
 \E[\|u_n \|_{V}^2|\FF_n]+ \varepsilon\gamma_n\langle{z_n- w},{Bz_n-B w}\rangle.
\end{cases}
\end{equation}
Note that, for each $n\in\NN$,  $\zeta_n$ and $\xi_n$ are non-negative and 
$\FF_n$-measurable. Moreover, $(\zeta_n)_{n\in\NN}$ is summable, and
hence, we derive from \cite[Theorem~1]{ComPes14} that 
\begin{equation}
\label{eq:aa}
\exists\; \tau =  \lim_{n\to\infty} \|w_{n}- w\|_{V}^2 \quad 
\text{and}
\quad \sum_{n\in\NN}(\alpha_n \|w_{n-1}- w\|_{V}^2 +\xi_n) < +\infty.
\end{equation}

Moreover, since $\inf \gamma_n>0$, we have
\begin{equation}\label{eq:co1}
\sum_{n\in\NN}\langle{z_n- w},{Bz_n-B w}\rangle < +\infty
\Longrightarrow \langle{w_n- w},{Bz_n- Bw}\rangle \to 0.
\end{equation} 
and 
\begin{equation}\label{eq:co2}
\sum_{n\in\NN} \E[\|u_n \|^2|\FF_n]< +\infty 
\Longrightarrow\E[\| z_n-w_{n+1} - \gamma_nU(r_n- B w)\|^2|\FF_n] \to 0.
\end{equation} 
Next, from the cocoercivity of $B$, we derive from ~\eqref{eq:co1} that 
\begin{equation}
\label{eq:co3}
Bz_n \to B w.
\end{equation} 
and we also derive from ~\eqref{eq:co2} and  \eqref{eq:co3}, and condition 2 in the statement, that
\begin{alignat}{2}\label{eq:co4}
\E[\| z_n-w_{n+1}\|^2|\FF_n] &\leq 2 \E[\| z_n-w_{n+1} - \gamma_nU(r_n- B w)\|^2|\FF_n] 
+ 2\E[\|\gamma_nU(r_n-B w)\|^2|\FF_n] \notag\\
&\leq2\bigg(\E[\| z_n-w_{n+1} - \gamma_nU(r_n- B w)\|^2|\FF_n] \bigg) + 
2\E[\|\gamma_nU(r_n- B z_n)\|^2|\FF_n] \notag\\
&\quad+2 \|\gamma_nU(Bz_n-Bw)\|^2\to 0.
\end{alignat} 
Hence, by condition 3, we obtain 
\begin{equation}\label{eq:co6}
 \E[\|r_n-Bw \|^2|\FF_n] \to 0.
\end{equation}
Now define 
\begin{equation}
(\forall n\in\NN)\quad\overline{w}_{n+1} = J_{\gamma_nA}(z_n-\gamma_nUB z_n).
\end{equation}
Then $\overline{w}_{n+1}$ is $\FF_n$-measurable 
since $ J_{\gamma_nA}\circ(\Id-\gamma_nUB)$ is continuous.
Therefore,
\begin{alignat}{2}
\label{e:ff11}
(\forall n\in\NN)\quad\|z_n-\overline{w}_{n+1} \|_{V}^2
&= \E[\|z_n-\overline{w}_{n+1} \|_{V}^2|\FF_n]\notag\\
&\leq 2\E[\|w_{n+1}-z_n \|_{V}^2|\FF_n]+2\E[\|\gamma_nU(r_n-Bz_n) \|_{V}^2|\FF_n] \to 0.
\end{alignat}

\ref{t:1i}:
Now, let $\overline{w}$ be a weak cluster point of $(w_n)_{n\in\NN}$, i.e., there exists a subsequence 
$(w_{k_n})_{n\in\NN}$ which converges weakly to $\overline{w}$. It follows from our assumption that
$ (z_{k_n})_{n\in\NN}$ converges weakly to $\overline{w}$.
By \eqref{e:ff11}, $(\overline{w}_{k_n+1})_{n\in\NN}$ converges weakly to $\overline{w}$. On the other hand, since 
$B$  is maximally monotone  and its graph is therefore 
sequentially closed in $\KK^{\text{weak}}\times\KK^{\text{strong}}$
\cite[Proposition~20.33(ii)]{livre1}, by \eqref{eq:co3},
$Bw = B\overline{w}$. By definition of resolvent operator, we have
\begin{equation}\label{eq:co7}
\frac{U^{-1}(z_{k_n}-\overline{w}_{k_{n}+1})}{\gamma_{k_n}}-Bz_{k_n} \in Aw_{{k_n}+1},
\end{equation}
and hence using the sequential closedness of the graph of $A$ in
$\KK^{\text{weak}}\times\KK^{\text{strong}}$
\cite[Proposition~20.33(ii)]{livre1},
we get $-B\overline{w} \in A\overline{w}$ or equivalently, 
$\overline{w}\in(A+B)^{-1}(\{0\})$. Therefore, every weak cluster point of $(w_n)_{n\in\NN}$
is in $(A+B)^{-1}(\{0\})$ which is non-empty closed convex \cite[Proposition~23.39]{livre1}. 
By \cite[Theorem~1]{ComPes14}, $(w_{n})_{n\in\NN}$ converges weakly to a random vector 
$\overline{w}$, taking values in $(A+B)^{-1}(\{0\})$ almost surely.\\
\ref{t:1ia}: From the cocoercivity of $B$, for every $n$ in $\NN$
\begin{alignat}{2}
 \|Bw_n-Bz_n\| &\leq \beta^{-1}\|w_n-z_n\|= \beta^{-1}\alpha_n\|w_n-w_{n-1}\| \to 0 
\end{alignat}
by \eqref{eq:aa}. By \eqref{eq:co3}, we obtain $Bw_n\to B\overline{w}$.

\ref{t:1ii}: This conclusion follows from since strong monotonicity implies demiregularity \cite[Definition 2.3]{AttBriCom10} 
and \ref{t:1ia}.
\end{proof}
Next we give a sketch of the proof for Theorem~\ref{t:2}.
\begin{proof}[Proof of Theorem~\ref{t:2}]
Let $\KK=\HH\times\GG$, and define  $A$ and $B$ as in~\eqref{eq:moincc}. Define $W\colon\GG\to\GG$ by setting $W(v_1,\ldots,v_s)=(W_1v_1,\ldots,W_sv_s)$.
Let $U^\prime\colon\KK\to\KK$ be the linear operator defined by setting $(w,v)\mapsto(V^{-1}w-D^*v, W^{-1}v-Dw)$.
Since $\|\sqrt{W}D\sqrt{V}\|<1$ by assumption, proceeding as in \cite[Lemma 4.3(i) and Lemma 4.9(i)]{PesRep14}, 
we get that $U^\prime$ is strongly positive and self-adjoint. Therefore, its inverse, denoted by $U$ is also strongly positive and self-adjoint. 
Since $B:(w,v)\mapsto(\nabla F(w),0)$, and $\nabla F$ is $\beta$ cocoercive, it follows that $B$ is $\beta\|V\|^{-1}$ cocoercive in the 
norm induced by $V$. By \cite[Lemma 4.3(ii)]{PesRep14} we also derive that $B$ is cocoercive in the norm induced by $U$ with cocoercivity constant
$\gamma=(1-\|\sqrt{W}D\sqrt{V}\|)\beta\|V\|^{-1}$.
The statement follows by noting that Algorithm~\ref{algopd1} can be equivalently written as
\begin{equation}
\label{e:mainod}
(\forall n\in \NN)\quad
\begin{array}{l}
\left\lfloor
\begin{array}{l} 
(u_{n},d_n) = (w_{n},v_n) + \alpha_n((w_{n},v_n)-(w_{n-1},v_{n-1}))\\
(w_{n+1},V_{n+1})= J_{U A}((u_{n},d_n)- U (r_{n},0))
\end{array} 
\right.\\[2mm]
\end{array}
\end{equation}
and all the assumptions of Theorem~\ref{t:1} are satisfied. 
\end{proof}
Finally, we also present the key steps to prove Theorem~\ref{mainal1a}.
The proof follows the same lines as that of Theorem~\ref{t:2}.

\begin{proof}{Proof of Theorem~\ref{mainal1a}}
Let $\KK=\HH\times\GG$, and define  $A$ and $B$ as in~\eqref{eq:moincc}. Define $W\colon\GG\to\GG$ by setting $W(v_1,\ldots,v_s)=(W_1v_1,\ldots,W_sv_s)$.
Let $T\colon\KK\to\KK\colon(w,v)\mapsto (Vw,(W^{-1}-D^*VD)^{-1}v)$. Then $T$ is strongly positive and self adjoint. Algebraic manipulations then show that with this choice
we can express Algorithm~\ref{algopd2} as 
\begin{equation}
\label{e:mainpd}
(\forall n\in \NN)\quad
\begin{array}{l}
\left\lfloor
\begin{array}{l} 
(u_{n},d_n) = (w_{n},v_n) + \alpha_n((w_{n},v_n)-(w_{n-1},v_{n-1}))\\
(w_{n+1},V_{n+1})= J_{T A}((u_{n},d_n)-T (r_{n},0)),
\end{array} 
\right.\\[2mm]
\end{array}
\end{equation}
which is a special instance of iteration \eqref{e:main1*}, with $(\forall n\in\NN)\;\gamma_n=1\in\left]\epsilon,(2-\epsilon)\beta\|T\|^{-1}\right[$
\end{proof}

\end{document}